\def\={=\!\!}
\newcommand*\dep{{=\mkern-1.2mu}}
\newcommand{\psfrag}[2]{}
\newtheorem{theorem}{Theorem}
\newtheorem{definition}[theorem]{Definition}
\newtheorem{lemma}[theorem]{Lemma}
\newcommand{\rind}[3]{{#2}~\bot_{#1}~{#3}}
\title{On Dependence Logic
}
\author{Pietro Galliani\thanks{Research partially supported by the EUROCORES LogICCC LINT programme, by the V\"ais\"al\"a Foundation and by by Grant 264917 of the Academy of Finland.}
\\ Department of Mathematics and Statistics\\ University of Helsinki, Finland\\ \and Jouko V\"a\"an\"anen\thanks{Research partially supported by
grant 40734 of the Academy of Finland and the EUROCORES LogICCC LINT programme.}\\ Department of Mathematics and Statistics\\ University of Helsinki, Finland\\ and \\
Institute for Logic, Language and Computation \\ University of Amsterdam, The Netherlands}
\date{}
\begin{document}
\maketitle
\def\vx{\vec{x}}
\def\vxp{\vec{x}\hspace{1pt}'}
\def\vyp{\vec{y}\hspace{1pt}'}
\def\vxpp{\vec{x}\hspace{1pt}''}
\def\vypp{\vec{y}\hspace{1pt}''}
\def\vzp{\vec{z}\hspace{1pt}'}
\def\vzpp{\vec{z}\hspace{1pt}''}
\def\vy{\vec{y}}
\def\vz{\vec{z}}
\def\vu{\vec{u}}
\def\vv{\vec{v}}
\def\vw{\vec{w}}
\def\boto{\ \bot\ }
\def\botop{\ \bot'\ }
\newcommand{\idep}[3]{#1\ \bot_{#2}\ #3}
\newcommand{\idepb}[2]{#1\ \bot\ #2}
\def\yxz{\idep{\vy}{\vx}{\vz}}
\def\xzy{\idep{\vx}{\vz}{\vy}}
\def\xzx{\idep{\vx}{\vz}{\vx}}
\def\xyx{\idep{\vx}{\vy}{\vx}}
\def\xzu{\idep{\vx}{\vz}{\vu}}
\def\uzy{\idep{\vu}{\vz}{\vy}}
\def\yzy{\idep{\vy}{\vz}{\vy}}
\def\yxy{\idep{\vy}{\vx}{\vy}}
\def\xyu{\idep{\vx}{\vy}{\vu}}
\def\xy{\idep{\vx}{\vy}}
\def\dyx{\dep(\vy,\vx)}


\def\mm{\mathfrak{M}}

\section{Introduction}

 The goal of dependence logic is to establish a basic theory of dependence and independence underlying such seemingly unrelated subjects  as causality, random variables, bound variables in logic, database theory, the theory of social choice, and even  quantum physics. There is an avalanche of new results in this field demonstrating remarkable convergence. The concepts of (in)dependence in the different fields of humanities and sciences have surprisingly much in common and a common logic is starting to emerge.

Dependence logic \cite{MR2351449} arose from the compositional semantics of Wilfrid Hodges  \cite{MR98k:03068} for the  independence friendly logic \cite{MR90j:03054,MR2807973}. In dependence logic the basic semantic concept is {\em not} that of an assignment $s$ satisfying a formula $\phi$ in a model $\mm$,
$$\mm\models_s\phi,$$ as in first order logic, but rather the concept of a {\em set} $S$ of assignments satisfying $\phi$ in $\mm$,
$$\mm\models_S\phi.$$ Defining satisfaction relative to a {\em set} of assignments opens up the possibility to express dependence phenomena, roughly as passing in propositional logic from one valuation to a Kripke model leads to the possibility to express modality. The focus in dependence logic is not on truth values but on variable values. We are interested in dependencies between individuals rather than between propositions.

In \cite{MR1433259} Johan van Benthem 
 writes: 

\begin{equation}\label{vB}
\mbox{\small\it \begin{tabular}{l}
``Sets of assignments $S$ encode several kinds of `dependence'\\
 between variables. There may not be one single intuition. \\
 `Dependence' may mean functional dependence \\
  (if two assignments agree in $S$ on $x$, they also agree on $y$),\\
but also other kinds of `correlation' among value ranges. \\
... \\
Different dependence relations may have different mathematical\\
 properties
 and suggest different logical formalisms.''  
\end{tabular}
}
\end{equation}
This is actually how things have turned out. For a start,
using the concept of functional dependence it is  possible, as Wilfrid Hodges \cite{MR98k:03068}  demonstrated, to define compositionally\footnote{Before \cite{MR98k:03068} it was an open question whether a compositional semantics can be given to independence friendly logic. } the semantics of independence friendly logic, the extension of first order logic by the quantifier
$$\exists x/y\phi\hspace{5mm}\mbox{  i.e. ``there is an $x$, independently of $y$, such that $\phi$''},$$ as follows: Suppose $S$ is a team of assignments, a ``plural state",  in a model $\mm$. Then
 $$\mm\models_S\exists x/y\phi$$
if and only if there is another set $S'$ such that $$\mm\models_{S'}\phi$$
and the following ``transition"-conditions hold:
\begin{itemize}

\item If $s\in S$, then there is $s'\in S'$ such that if $z$ is a variable other than $x$, then $s(z)=s'(z)$.
\item If $s'\in S'$, then there is $s\in S$ such that if $z$ is a variable other than $x$, then $s(z)=s'(z)$.

\item If $s,s'\in S'$ and $s(z)=s'(z)$ for all variables other than $y$ or $x$, then $s(x)=s'(x)$.

\end{itemize}  
In a sense, independence friendly logic is a logical formalism suggested by the functional dependence relation, but its origin is in game theoretical semantics, not in dependence relations. With dependence logic the situation is different. It was directly inspired by the functional dependence relation introduced by Wilfrid Hodges. 

Peter van Emde Boas pointed out to the second author in the fall of 2005 that the functional dependence behind dependence logic is known in database theory \cite{armstrong}. This led the second author to realize---eventually---that the dependence we are talking about here is not just about variables in logic but a much more general phenomenon, covering such diverse areas as algebra, statistics, computer science, medicine, biology, social science, etc. 

  
As Johan van Benthem points out in (\ref{vB}), there are different dependence intuitions. Of course the same is true of intuitions about independence. For some time it was not clear what would be the most natural concept of {\em independence}. There was the obvious but rather weak form of independence of $x$ from $y$ as dependence of $x$ on some  variable $z$ other than $y$. Eventually a strong form of independence was introduced in \cite{GV}, which has led to a breakthrough in our understanding of dependence relations and their role.     

We give an overview of some developments in dependence logic (Section~\ref{dl}) and independence logic (Section~\ref{il}). This is a tiny selection, intended for a newcomer, from a rapidly growing literature on the topic. Furthermore, in Section \ref{cil} we discuss conditional independence atoms and we prove a novel result -- that is, that conditional and non-conditional independence logic are equivalent. Finally, in Section \ref{bel} we briefly discuss an application of our logics to belief representation.
\section{Functional dependence}\label{dl}

The approach of \cite{MR2351449} is that one should look for the strongest concept of dependence and use it to define weaker versions. Conceivably one could do the opposite, start from the weakest and use it to define stronger and strong concepts. The weakest dependence concept---whatever it is---did not offer itself immediately, so the strongest was more natural to start with. The wisdom of focusing in the extremes lies in the hope that the extremes are most likely to manifest simplicity and robustness, which would make them susceptible to a theoretical study.

Let us start with the strongest form of dependence, functional dependence. We use the vector notation $\vx$ for finite sequences $x_1,\ldots,x_n$ of variables\footnote{Or attributes, something that has a value.}. We add to first order logic\footnote{The basic ideas can be applied to almost any logic, especially to modal logic.}
new atomic formulas%
\begin{equation}\label{dep1}
\dep(\vy,\vx),
\end{equation}
with the intuitive meaning $$\mbox{``the  $\vy$ totally determine the
 $\vx$}.$$ In other words, the meaning of (\ref{dep1}) is that the values of the variables $\vy$ functionally determine the values of the variables $\vx$. We think of the atomic formulas (\ref{dep1}) on a par with the atomic formula $x=y$. In particular, the idea is that the formula (\ref{dep1}) is a purely logical expression, not involving any non-logical symbols, in particular no function symbol for the purported function manifesting the functional dependence.  
 
The best way to understand the concept (\ref{dep1}) is to give it exact semantics: 
To this end, suppose $\mm$ is a model. Suppose $S$ is a set of assignments into $M$ (or a {\em team} as such sets are called). We define:

\begin{definition}\label{def2}
The team  $S$ satisfies $\dep(\vy,\vx)$ in $\mm$, in symbols 
 $$\mm\models_S \ \dep(\vy,\vx) $$
 if \begin{equation}\label{funct1}\forall s,s'\in S(s(\vy)=s'(\vy)\to s(\vx)=s'(\vx)).\end{equation}
\end{definition}

One may ask, why not define the meaning of $\dep(y,x)$ as ``there is a function which maps $y$ to $x$"? The answer is that if we look at the meaning of $\dep(y,x)$ under {\em one} assignment $s$, then there {\em always} is a function $f$ mapping $s(y)$ to $s(x)$, namely the function $\{(s(y),s(x))\} $, and if we look at the meaning of $\dep(y,x)$ under {\em many} assignments, a team, then (\ref{funct1}) is indeed equivalent to the statement that there is a function mapping $s(y)$ to $s(x)$ for all $s$ in the team.

A special case of $\dep(\vy,\vx)$ is $\dep(\vx)$, the {\em constancy atom}. The intuitive meaning of this atom is that the value of $\vx$ is constant in the team. It results from $\dep(\vy,\vx)$ when $\vy$ is the empty sequence.

%
 
%

%
 
Functional dependence has been studied in database theory and some basic properties, called {\bf Armstrong's Axioms} have been isolated \cite{armstrong}. These axioms state the following properties of $\dep(\vy,\vx)$:

\begin{description}
\item[(A1)] $\dep(\vx,\vx)$. Anything is functionally dependent of itself.
\item[(A2)] If $\dep(\vy,\vx)$ and $\vy\subseteq\vz$, then $\dep(\vz,\vx)$. Functional dependence is preserved by increasing input data.
\item[(A3)] If $\vy$ is a permutation of $\vz$, $\vu$ is a permutation of $\vx$, and $\dep(\vz,\vx)$, then $\dep(\vy,\vu)$. Functional dependence does not look at the order of the variables.
\item[(A4)] If $\dep(\vy,\vz)$  and $\dep(\vz,\vx)$, then  $\dep(\vy,\vx)$. Functional dependences can be transitively composed. 
\end{description} 

The following result is well-known in the database community and included in textbooks of database theory:\footnote{See e.g. \cite{mannila}.}
\begin{theorem}[\cite{armstrong}]The axioms (A1)-(A4) are complete in the  sense that a relation $\dep(\vec{x},\vec{y})$ follows by the rules (A1)-(A4) from a set $\Sigma$ of relations of the same form if and only if every team which satisfies $\Sigma$ satisfies $\dep(\vec{y},\vec{x})$.
\end{theorem}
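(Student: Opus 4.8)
The plan is to prove the two implications separately, using the convention of Definition~\ref{def2} that the atom $\dep(\vy,\vx)$ asserts that $\vy$ functionally determines $\vx$. The ``only if'' (soundness) direction is routine: I would check straight from Definition~\ref{def2} that each of (A1)--(A4) preserves satisfaction in an arbitrary team $S$ -- for instance (A4) is just the transitivity of the implications ``agreement on $\vy$ forces agreement on $\vz$'' and ``agreement on $\vz$ forces agreement on $\vx$'' -- and then run an easy induction on the length of a derivation to conclude that whatever is derivable from $\Sigma$ holds in every team satisfying $\Sigma$. The substance of the theorem is the ``if'' (completeness) direction, which I would prove by contraposition: assuming $\dep(\vy,\vx)$ is \emph{not} derivable from $\Sigma$, I would exhibit a single team that satisfies every atom of $\Sigma$ yet violates $\dep(\vy,\vx)$.

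The main tool is the closure of $\vy$ under $\Sigma$, namely $\vy^{+}=\{\,v:\Sigma\vdash\dep(\vy,v)\,\}$, the set of single variables that the rules force $\vy$ to determine. First I would record two derived rules. Decomposition ($\Sigma\vdash\dep(\vy,\vx\vu)$ implies $\Sigma\vdash\dep(\vy,\vx)$) follows by composing $\dep(\vy,\vx\vu)$ with the instance $\dep(\vx\vu,\vx)$ of (A1)+(A2) through (A4). Additivity ($\Sigma\vdash\dep(\vy,\vx)$ and $\Sigma\vdash\dep(\vy,\vu)$ imply $\Sigma\vdash\dep(\vy,\vx\vu)$) is the second. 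With these in hand I would prove the key lemma that $\Sigma\vdash\dep(\vy,\vx)$ holds if and only if every variable occurring in $\vx$ lies in $\vy^{+}$: the forward direction is decomposition, the converse is iterated additivity. Note also that $\vy\subseteq\vy^{+}$, since $\dep(\vy,y_i)$ is immediate from (A1) and (A2) for each component $y_i$.

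The counterexample is then a two-row team $S=\{s,s'\}$ over a two-element domain, with $s(v)=s'(v)$ precisely when $v\in\vy^{+}$ and $s(v)\ne s'(v)$ for every other variable $v$. Then $\mm\models_S\dep(\vy,\vx)$ fails: $s,s'$ agree on $\vy$ because $\vy\subseteq\vy^{+}$, yet by the key lemma and the assumption $\Sigma\nvdash\dep(\vy,\vx)$ some variable of $\vx$ lies outside $\vy^{+}$, so $s,s'$ disagree there. Conversely $\mm\models_S\sigma$ for each $\sigma=\dep(\vz,\vw)\in\Sigma$: if $s,s'$ disagree on $\vz$ the atom holds vacuously, and if they agree on $\vz$ then $\vz\subseteq\vy^{+}$, so $\Sigma\vdash\dep(\vy,\vz)$ by additivity, whence $\Sigma\vdash\dep(\vy,\vw)$ by (A4) and $\vw\subseteq\vy^{+}$ by decomposition, giving agreement on $\vw$. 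This team witnesses that $\dep(\vy,\vx)$ is not a semantic consequence of $\Sigma$, which is the required contrapositive.

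The step I expect to be the main obstacle is additivity, which is invoked twice (in the key lemma and in verifying $\mm\models_S\sigma$). The natural derivation augments $\dep(\vy,\vx)$ and $\dep(\vy,\vu)$ to a common middle term and chains them through (A4), but this needs augmentation in the two-sided form $\dep(\vy,\vx)\Rightarrow\dep(\vy\vz,\vx\vz)$, which is genuinely stronger than the left-weakening supplied by (A2); indeed one can check that (A1)--(A4) with only (A2) do not derive $\dep(\vy,\vx\vu)$ from $\dep(\vy,\vx)$ and $\dep(\vy,\vu)$. So the delicate point is to secure this full augmentation -- Armstrong's standard augmentation axiom -- as part of the rule set. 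Once additivity is available, the remaining work, the closure computation and the verification that the two-row team behaves as claimed, is bookkeeping against Definition~\ref{def2}.
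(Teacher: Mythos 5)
Your argument is, in essence, the paper's own: your closure $\vy^{+}$ is exactly the paper's set $V$ of variables $z$ with $\Sigma\vdash\dep(\vy,z)$, your two-row team over a two-element domain in which the rows agree precisely on $\vy^{+}$ is exactly the paper's team, and your two verifications (the atom $\dep(\vy,\vx)$ fails because $\vy\subseteq\vy^{+}$ while some variable of $\vx$ lies outside it; each $\dep(\vv,\vw)\in\Sigma$ holds either vacuously or because $\vv\subseteq\vy^{+}$ forces $\vw\subseteq\vy^{+}$) match the paper's step for step, with the soundness direction made explicit where the paper omits it.

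The point you single out as delicate is not pedantry: it is a real gap, and it is present in the paper's own proof, which uses additivity tacitly twice --- once to conclude that $\vx\cap W\ne\emptyset$ (if every variable of $\vx$ were in $V$, getting $\Sigma\vdash\dep(\vy,\vx)$ needs the union rule), and once in the step ``if each $v$ is in $V$, then so is each $w$'' (one must first assemble $\Sigma\vdash\dep(\vy,\vv)$ from the single-variable facts before (A4) can be applied to $\dep(\vv,\vw)$). Your claim that additivity is underivable from (A1)--(A4) as literally stated is correct. Over variables $y,x,u$, read $\dep(\vv,\vw)$ as membership of the pair of underlying sets $(A,B)$ in the family consisting of all pairs with $B\subseteq A$, all pairs with $y\in A$ and $B\subseteq\{y,x\}$, and all pairs with $y\in A$ and $B\subseteq\{y,u\}$. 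This family is closed under (A1)--(A4), contains $\dep(y,x)$ and $\dep(y,u)$, but omits $\dep(y,xu)$, which is nonetheless a semantic consequence of the first two. So (A1)--(A4), read literally, do not satisfy the completeness theorem; reflexivity from (A1)+(A2) and hence decomposition via (A4) are available, as you say, but the one-sided weakening (A2) cannot be bootstrapped into two-sided augmentation $\dep(\vy,\vx)\Rightarrow\dep(\vy\vz,\vx\vz)$. Both your proof and the paper's go through once the rule set is strengthened by Armstrong's augmentation rule (equivalently, by adding the union rule), exactly the emendation you propose.
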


\begin{proof}
Suppose $\dep(\vy,\vx)$ does {not} follow by the rules from  a set $\Sigma$ of atoms. Let $V$ be the set of variables $z$ such that $\dep(\vy,z)$ follows by the rules from $\Sigma$. Let $W$ be the remaining variables in $\Sigma\cup\{\dep(\vy,\vx)\}$. Thus $\vx\cap W\ne\emptyset$. Consider the model $\{0,1\}$ of the empty vocabulary and the team
\medskip

\begin{center}
\begin{tabular}{|c|c|c|c|c|c|c|c|}
\hline
\multicolumn{4}{|c}{The variables in $V$}&
\multicolumn{4}{|c|}{The variables in $W$}\\
\hline
0&0&$\ldots$&0&0&$\ldots$&$\ldots$&0\\
0&0&$\ldots$&0&1&1&$\ldots$&1\\
\hline
\end{tabular}
\end{center}
\medskip

\noindent The atom $\dep(\vy,\vx)$ is not true in this team, because $\vy\subseteq V$ and $\vx\cap W\ne\emptyset$. Suppose then $\dep(\vv,\vw)$ is one of the assumptions. If each $v$ is in $V$, then so is each $w$ so they all get value $0$. On the other hand, if some $v$ is in $W$, it gets in this team two values, so it cannot violate dependence.

\end{proof}

We now extend the truth definition (\ref{def2}) to the full first order logic augmented by the dependence atoms $\dep(\vx,\vy)$. To this end, let $s(a/x)$ denote the assignment which agrees with $s$ except that it gives $x$ the value $a$. We define for formulas which have negation in front of atomic formulas only: 

\medskip

{\small\begin{equation}\label{truth}
\left.\mbox{\begin{tabular}{lcl}

$\mm\models_S x=y$&$\iff$&$\forall s\in S(s(x)=s(y))$.\\
$\mm\models_S \neg x=y$&$\iff$&$\forall s\in S(s(x)\ne s(y))$.\\

 $\mm\models_S R(x_1,\ldots,x_n)$&$\iff$&$\forall s\in S((s(x_1),\ldots,s(x_n))\in R^\mm)$.\\

$\mm\models_S \neg R(x_1,\ldots,x_n)$&$\iff$&$\forall s\in S((s(x_1),\ldots,s(x_n))\notin R^\mm)$.\\

$\mm\models_S\phi\wedge\psi$&$\iff$&$\mm\models_S\phi\mbox{ and }\mm\models_S\psi$.\\

$\mm\models_S\phi\vee\psi$&$\iff$&$\mbox{There are $S_1$ and $S_2$ such that }$\\
&&\mbox{$S=S_1\cup S_2$, }$ \mm\models_{S_1}\phi, \mbox{ and }\mm\models_{S_2}\psi$.\\

$\mm\models_S\exists x\phi$&$\iff$&$\mm\models_{S'}\phi\mbox{ for some $S'$ such that  }$\\

&&$\forall s\in S\ \exists a\in M(s(a/x)\in S')$\\

$\mm\models_S\forall x\phi$&$\iff$&$\mm\models_{S'}\phi\mbox{ for some $S'$ such that  }$\\

&&$\forall s\in S\ \forall a\in M(s(a/x)\in S')$\\

\end{tabular}}\right\}
\end{equation}
}
\medskip

It is easy to see that for formulas not containing any dependence atoms, that is, for pure first order formulas $\phi$, 
$$\mm\models_{\{s\}}\phi\iff \mm\models_s\phi$$
and 
$$\mm\models_S\phi\iff\forall s\in S(\mm\models_s\phi),$$
where $\mm\models_s\phi$ has its usual meaning. This shows that the truth conditions  
(\ref{truth}) agree with the usual Tarski truth conditions for first order formulas. Thus considering the ``plural state'' $S$ rather than individual ``states'' $s$ makes no difference for first order logic, but it makes it possible to give the dependence atoms $\dep(\vx,\vy)$ their intended meaning.





What about  axioms for non-atomic formulas of dependence logic? Should we adopt new axioms, apart from the Armstrong Axioms [A1-A4]? There is a problem! Consider the sentence

\begin{equation}
\label{infty}
\exists x\forall y\exists z(\dep(z,y)\wedge \neg z=x).
\end{equation}
It is easy to see that this sentence is satisfied by a team in a model $\mm$ if and only $M$ is infinite. As a result, by general considerations going back to G\"odel's Incompleteness Theorem, the semantic consequence relation
$$\phi\models\psi\iff\forall\mm\forall S(\mm\models_S\phi\to\mm\models_S\psi)$$ is non-arithmetical. Thus there cannot be any completeness theorem in the usual sense. However, this does not prevent us from trying to find axioms and rules which are as complete as possible.
This is what is done in  \cite{kontvaa}, where a complete axiomatization is given for {\em first order} consequences of dependence logic sentences. The axioms are a little weaker than standard first order axioms when applied to dependence formulas, but on the other hand there are two special axioms for the purpose of  dealing with dependence atoms as parts of formulas in a deduction. Rather than giving all details (which can be found in \cite{kontvaa}) we give just an example of the use of both new rules.

Suppose we are given $\epsilon,x,y$ and $f$, and we have already concluded, in the middle of some argument, the following:
\bigskip

{\it 

\noindent\hspace{2cm}\begin{tabular}{l}
If $\epsilon>0$, then  there is $\delta>0$
 depending only on $\epsilon$ such that\\
   if $|x-y|<\delta$, then $|f(x)-f(y)|<\epsilon$.
\end{tabular}
}
\bigskip

\noindent  By merely logical reasons we should be able to conclude
\bigskip

{\it
\noindent\hspace{2cm}\begin{tabular}{l}
There is $\delta>0$ depending only on $\epsilon$ such that\\
  {if $\epsilon>0$} and $|x-y|<\delta$,
   then $|f(x)-f(y)|<\epsilon$.
\end{tabular}
}

\bigskip

\noindent Note that ``{\it depending only on $\epsilon$}" has moved from inside the implication to outside of it. The new rule of dependence logic, isolated in \cite{kontvaa}, which permits this, is called {\em {Dependence Distribution Rule}}.
Neither first order rules nor Armstrong's Axioms  give this because neither of them gives any clue of how to deal with dependence atoms as parts of bigger formulas.

Here is another example of inference in dependence logic: Suppose we have arrived at the following formula in the middle of some argument:

\bigskip

{\it 
\noindent\hspace{2cm}\begin{tabular}{l}
For every $x$ and every $\epsilon>0$
there is $\delta>0$
 depending only on $\epsilon$\\ such that for all $y$, 
 if $|x-y|<\delta$, then $|f(x)-f(y)|<\epsilon$.
\end{tabular}
\bigskip
}

\noindent  On merely logical grounds we should be able to make the following conclusion:
\bigskip
{\it 

\noindent\hspace{2cm}\begin{tabular}{l}
For every $x$ and every $\epsilon>0$ there is $\delta>0$\\ 
such that for all $y$, if $|x-y|<\delta$, then $|f(x)-f(y)|<\epsilon$,\\
and moreover, for any other $x'$ and $\epsilon'>0$ there is $\delta'>0$\\
 such that for all $y'$, if $|x'-y'|<\delta'$, then $|f(x')-f(y')|<\epsilon$\\
 and if $\epsilon=\epsilon'$, then $\delta=\delta'$.
\end{tabular}
}

\bigskip
\noindent The new rule, isolated in \cite{kontvaa} which permits this step is called 
{\em Dependence Elimination Rule}, because the dependence atom {\it ``depending only on $\epsilon$"} has been entirely eliminated. The conclusion is actually first order, that is, without any occurrence of dependence atoms.

The first author \cite{pietro1} has given an alternative complete axiomatization, not for first order consequences of dependence sentences, but for dependence logic consequences of first order sentences. Clearly, more results about partial axiomatizations of the logical consequence relation in dependence logic can be expected in the near future. 

An important property of dependence logic is the {\em downward closure} \cite{MR99i:03044}: If $\mm\models_S\phi$ and $S'\subseteq S$, then $\mm\models_{S'}\phi$. It is a trivial matter to prove this by induction on the length of the formula. Once the downward closure is established it is obvious that we are far from having a negation in the sense of classical logic. Intuitively, dependence is a restriction of freedom (of values of variables in assignments). When the team gets smaller there is even less freedom. This intuition about the nature of dependence prevails in all the logical operations of dependence logic. Since dependence formulas are easily seen to be representable in existential second order logic, the following result shows that downward closure is really {\em the} essential feature of dependence logic:

\begin{theorem}[\cite{kont}]\label{main1} Let us fix a vocabulary $L$ and an $n$-ary predicate symbol $S\notin L$. Then:
\begin{itemize}
\item For every $L$-formula $\phi(x_1,...,x_n)$ of dependence logic  there is an existential second order $L\cup\{S\}$-sentence $\Phi(S)$, closed downward with respect to $S$, such that for all $L$-structures $M$  and all teams $X$:
\begin{equation}
\mm\models_X\phi(x_1,...,x_n)\iff \mm\models\Phi(X).
\label{eq}
\end{equation}

\item For every existential second order $L\cup\{S\}$-sentence $\Phi(S)$, closed downward with respect to $S$,  there exists an $L$-formula $\phi(x_1,...,x_n)$ of dependence logic  such that (\ref{eq}) holds  for all $L$-structures $M$ and all teams $X\ne\emptyset$. 
\end{itemize} \end{theorem}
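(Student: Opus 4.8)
The plan is to prove the two directions separately: the first by a routine induction on formula structure, the second via a Skolem normal form, which is where essentially all the difficulty lies. Throughout, I identify a team $X$ on the variables $x_1,\dots,x_n$ with the $n$-ary relation $\{(s(x_1),\dots,s(x_n)) : s\in X\}$, which is exactly the interpretation of $S$ occurring in $\Phi(X)$. Note that every $n$-ary relation arises in this way from some team, so statements about $\Phi$ on teams and on arbitrary relations coincide.

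For the first direction I would define $\Phi(S)$ by recursion on $\phi(x_1,\dots,x_n)$. The first-order atomic cases become universal conditions relativised to $S$: e.g. $x_i=x_j$ yields $\forall\vx\,(S(\vx)\to x_i=x_j)$, and a dependence atom $\dep(\vy,\vx)$ yields the first-order sentence $\forall\vx\,\forall\vxp\,(S(\vx)\wedge S(\vxp)\wedge \vy=\vyp\to \vx=\vxp)$ expressing functional dependence; neither needs a second-order quantifier. Conjunction gives $\Phi\wedge\Psi$ over the same $S$. The clauses that split or extend the team are the interesting ones: for $\phi\vee\psi$ I existentially quantify $S_1,S_2$, assert $\forall\vx\,(S(\vx)\leftrightarrow S_1(\vx)\vee S_2(\vx))$, and relativise the two translations to $S_1$ and $S_2$; for $\exists x\,\phi$ and $\forall x\,\phi$ I existentially quantify a fresh relation $T$ incorporating the quantified variable, express the supplementation (respectively duplication) condition linking $T$ to $S$ by a first-order formula, and relativise the translation of $\phi$ to $T$. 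An easy induction gives (\ref{eq}). That $\Phi(S)$ is downward closed in $S$ then need not be checked syntactically at all: it is immediate from (\ref{eq}) together with the downward closure property of dependence logic established just above.

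The second direction is the substantial one. Given a downward-closed existential second-order $\Phi(S)$, I first bring it to Skolem normal form, replacing the existential relation quantifiers and the first-order existential quantifiers by existential \emph{function} quantifiers, so that $\Phi(S)$ becomes equivalent to $\exists f_1\cdots\exists f_m\,\forall\vy\,\theta$ with $\theta$ quantifier-free and the terms $f_i(\cdots)$ occurring in $\theta$. The candidate dependence formula then has the shape $\forall\vy\,\exists z_1\cdots\exists z_m\,(\dep(\vu_1,z_1)\wedge\cdots\wedge\dep(\vu_m,z_m)\wedge\theta')$, where $\vu_i$ lists the arguments of $f_i$, the variable $z_i$ stands for $f_i(\vu_i)$, and $\theta'$ is $\theta$ with each $f_i(\vu_i)$ replaced by $z_i$. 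The leading $\forall\vy$ builds from $X$ a team in which $\vy$ ranges over all tuples, while each $\dep(\vu_i,z_i)$ forces $z_i$ to be a genuine function of $\vu_i$, so a satisfying split of the team records exactly the Skolem functions $f_i$; verifying this correspondence is the routine part.

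The main obstacle is that $\theta$ refers to the team predicate $S$ at arbitrary argument tuples, whereas a dependence formula can only access the relation coded by its team implicitly, through the values its assignments give to $x_1,\dots,x_n$. This is precisely where downward closure and the restriction to nonempty teams become essential. Exploiting that $\Phi$ is monotone decreasing in $S$, I would first arrange that $S$ occurs only negatively, reducing the task to testing non-membership $\vec t\notin S$ of term-tuples. The heart of the construction is then a gadget performing such tests using the team alone: since the $\forall\vy$-expansion leaves the set of $x_1,\dots,x_n$-values equal to $S$ while pairing it with every value of the fresh variables, a suitable disjunction can split the team so as to separate, consistently across all the relevant tuples, the assignments witnessing membership from those witnessing non-membership, with the dependence atoms certifying that the split respects the guessed functions. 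Carrying out this split so that it is simultaneously correct for all tuples, and confirming that the degenerate empty-team case is exactly what forces the $X\neq\emptyset$ proviso, is the delicate point; the remaining verifications are bookkeeping inductions.
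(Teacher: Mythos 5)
First, a point of reference: the paper itself contains \emph{no} proof of Theorem \ref{main1} --- it is stated with a citation to \cite{kont} --- so your attempt can only be compared with the proof in that reference. Your first direction matches the routine half of that proof and is fine; in particular, it is legitimate to obtain downward closure of $\Phi$ in $S$ semantically from (\ref{eq}) together with the downward closure of dependence logic, since every $n$-ary relation on $M$ is the relation of some team. The genuine gap is in the second direction, at exactly the point you yourself call ``the delicate point'' and leave unexecuted: the mechanism by which the dependence formula is to access $S$.

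The gadget you propose --- a disjunction that splits the expanded team ``so as to separate the assignments witnessing membership from those witnessing non-membership,'' with dependence atoms certifying the split --- cannot work as described. In team semantics a disjunction evaluates each disjunct on a sub-team, and any atom placed inside a disjunct constrains only that sub-team; after a split, the set of $x_1,\dots,x_n$-values of a part may be an arbitrary subset of $S$, so nothing evaluated inside a disjunct can detect membership in $S$ (downward closure, the very property you are exploiting, is what makes a sub-team unable to remember $S$), and nothing forces a bare split to be uniform in the value of a term tuple. The proof in \cite{kont} avoids membership tests altogether: downward closure gives the equivalence $\Phi(S)\iff\exists S'\,\bigl(S\subseteq S'\wedge\Phi(S')\bigr)$, after which $S$ occurs exactly once, negatively, inside the guard $\forall\vec{x}\,(S(\vec{x})\to S'(\vec{x}))$; identifying the guard tuple with the free variables $x_1,\dots,x_n$ makes team semantics implement the guard automatically, while the existentially quantified $S'$ joins the Skolem functions and is coded by a characteristic function --- concretely, two variables forced constant and distinct by constancy atoms, plus $\forall\vec{u}\,\exists z$ with the atom $\dep(\vec{u},z)$ and pointwise first-order clauses such as $\vec{u}\neq\vec{x}\vee z=a$. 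All dependence atoms then sit at the outermost conjunctive level, over the whole expanded team, and the only disjunctions are between first-order (hence flat, pointwise) formulas: no team is ever split along membership in $S$. Your sketch names the right ingredients (Skolem normal form, negativity of $S$, the role of $X\neq\emptyset$, which indeed enters when conjuncts are pulled under the guard, since $\forall\vec{x}(S(\vec{x})\to A)$ yields $A$ only for $S\neq\emptyset$; one-element models also need separate treatment, as two distinct constants are unavailable there), but the construction that makes the second direction true is missing, and the one you gesture at points the wrong way.
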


This shows that dependence logic is maximal with respect to the properties of being expressible in existential second order logic and being downward closed. This theorem is also the source of the main model theoretical properties of dependence logic. The Downward L\"owenheim-Skolem Theorem, the Compactness Theorem  and the Interpolation Theorem are immediate corollaries. Also, when the above theorem is combined with the Interpolation Theorem of first order logic, we get the  fact that dependence logic sentences $\phi$ for which there exists a dependence logic sentence $\psi$ such that for all $\mm$ $$\mm\models\psi\iff\mm\not\models\phi$$ are first order definable. So not only does dependence logic not have the classical negation, the only sentences that have a classical negation are the first order sentences.


\section{Independence logic}\label{il}

Independence logic was introduced in \cite{GV}. Before going into the details, let us look at the following precedent:

In \cite{MR1433259} Johan van Benthem suggested, as an example of an ``other kind of correlation'' than functional dependence, the following dependence relation for a team $S$ in a model $\mm$:
 \begin{equation}
\label{indepp}\exists a\in M\exists b\in M(\{s(x):s\in S, s(y)=a\}\ne\{s(x):s\in S, s(y)=b\}).
\end{equation} 
The opposite of this would be
\begin{equation}
\label{indeppp}\forall a\in M\forall b\in M(\{s(x):s\in S, s(y)=a\}=\{s(x):s\in S, s(y)=b\}),
\end{equation} 
which is  a kind of independence of $x$ from $y$, for if we take $s\in S$ and we are told what $s(y)$ is, we have learnt nothing about $s(x)$, because for each $a\in M$ the set $$\{s(x):s\in S, s(y)=a\}$$ is the same. This is the idea behind the independence atom $\vx\boto\vy$: the values of $\vx$ should not reveal anything about the values of $\vy$ and vice versa. More exactly, suppose $\mm$ is a model and $S$ is a team of assignments into $M$. We define:

\begin{definition}\label{depdef}
A team $S$ satisfies the atomic formula $\vx\boto \vy$ in $\mm$ if \begin{equation}
\label{indep}\forall s,s'\in S\exists s''\in S(s''(\vy)=s(\vy)\wedge s''(\vx)=s'(\vx)).
\end{equation} 
\end{definition}


We can immediately observe that a constant variable is independent of every  variable, including itself. To see this, suppose $x$ is constant in $S$. Let $y$ be any variable, possibly $y=x$. If $s,s'\in S$ are given, we need $s''\in S$ such that $s''(x)=s(x)$ and $s''(y)=s'(y)$. We can simply take $s''=s'$. Now $s''(x)=s(x)$, because $x$ is constant in $S$. Of course, $s''(y)=s'(y)$. Conversely, if $x$ is independent of every  variable, it is clearly constant, for it would have to be independent of itself, too. So we have $$\dep(\vx)\iff \vx\boto \vx.$$

We can also immediately observe the symmetry of independence, because  criterion (\ref{indep}) is symmetrical in $x$ and $y$. More exactly, $s''(y)=s(y)\wedge s''(x)=s'(x)$ and  $s''(x)=s'(x)\wedge s''(y)=s(y)$ are trivially equivalent.



Dependence atoms were governed by Armstrong's Axioms. Independence atoms have their own axioms introduced in the context of random variables in \cite{MR1097266}:

\begin{definition} The following rules are  the {\bf Independence Axioms}
\begin{enumerate}
\item $\vx\boto\emptyset$ (Empty Set Rule)

\item If $\vx\boto \vy$, then $\vy\boto \vx$ (Symmetry Rule).
\item If $\vx\boto \vy\vz$, then $\vx\boto \vy$ (Weakening Rule).
\item If $\vx\boto \vx$, then $\vx\boto \vy$ (Constancy Rule).
\item If $\vx\boto \vy$ and $\vx\vy\boto\vz$, then $\vx\boto \vy\vz$ (Exchange Rule).

\end{enumerate} 
\end{definition}

Note that $xy\boto xy$ is derivable from $x\boto x$ and $y\boto y$, by means of the Empty Set Rule, the Constancy Rule and the Exchange Rule.
\medskip



It may seem that independence must have much more content than what these four axioms express, but they are actually complete in the following sense\footnote{This was originally proved for random variables in \cite{MR1097266} and then adapted for databases in \cite{KLV}.}: 

\begin{theorem}[Completeness of the Independence Axioms, \cite{MR1097266}]
If $T$ is a finite set of independence atoms of the form $\vu\boto \vv$ for various $\vu$ and $\vv$, then $\vy\boto \vx$ follows from $T$ according to the above rules if and only if every team that satisfies $T$ also satisfies $\vy\boto \vx$. 

\end{theorem}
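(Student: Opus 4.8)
The plan is to prove the two directions separately: soundness (every derivable atom holds in every team satisfying $T$) is the routine half, and completeness (the converse) is the substantive one, which I would attack contrapositively by constructing a countermodel. For soundness I would check the five rules one at a time against condition (\ref{indep}). The Empty Set and Symmetry rules are immediate (for $\vx\boto\emptyset$ take $s''=s'$; condition (\ref{indep}) is literally symmetric in $\vx$ and $\vy$). For Weakening, any witness $s''$ for $\vx\boto\vy\vz$, which matches $s$ on $\vy\vz$ and $s'$ on $\vx$, in particular matches $s$ on $\vy$, hence witnesses $\vx\boto\vy$. For Constancy, $\vx\boto\vx$ forces $\vx$ to be constant, so given $s,s'$ one may take $s''=s$ and get $s''(\vx)=s(\vx)=s'(\vx)$. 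For Exchange, from $\vx\boto\vy$ pick $t\in S$ agreeing with $s$ on $\vy$ and with $s'$ on $\vx$; then from $\vx\vy\boto\vz$ applied to the pair $(t,s)$ obtain $r\in S$ agreeing with $t$ on $\vx\vy$ (hence with $s$ on $\vy$ and $s'$ on $\vx$) and with $s$ on $\vz$, so $r$ witnesses $\vx\boto\vy\vz$.

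For completeness I would assume $\vy\boto\vx$ is not derivable from $T$ and build a single team satisfying every atom of $T$ but refuting $\vy\boto\vx$. The key point is that the obvious product construction is \emph{inadequate}: if one assigns to each variable $w$ a set of shared coordinates and takes $S$ to be the full product, the resulting independence relation validates the composition principle ``$\vx\boto\vy$ and $\vx\boto\vz$ imply $\vx\boto\vy\vz$,'' which is neither sound nor derivable here. A correct countermodel must be able to exhibit sets that are pairwise independent yet jointly dependent. I would therefore use a linear, parity-based construction: fix a vector space over a finite field $\mathbb{F}$, assign to each variable $w$ a vector (equivalently a linear functional) $\ell_w$, and let $S$ be the team obtained by evaluating the $\ell_w$ over all points of the space (in the probabilistic formulation of \cite{MR1097266}, the uniform distribution on the corresponding linear code). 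One then checks that the projection of $S$ onto $\vu\cup\vv$ is a Cartesian product exactly when $\mathrm{span}\{\ell_u:u\in\vu\}\cap\mathrm{span}\{\ell_v:v\in\vv\}=\{0\}$, so that $\mm\models_S \vu\boto\vv$ reduces to this subspace condition. Because spans can interlock as in the parity configuration $\ell_A,\ \ell_B,\ \ell_A+\ell_B$, this semantics genuinely refutes composition and is thus a plausible home for the full countermodel; derivably constant variables are handled by setting $\ell_w=0$.

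The heart of the argument, and the step I expect to be the main obstacle, is to produce from the mere \emph{failure} of a syntactic derivation an explicit assignment $w\mapsto\ell_w$ that realizes the trivial-intersection condition for every atom of $T$ while forcing $\mathrm{span}(\vy)\cap\mathrm{span}(\vx)\neq\{0\}$. Soundness already gives that the syntactic closure of $T$ is contained in what every such linear model forces, so it suffices to show the reverse inclusion, namely that the intersection of all linear models of $T$ is exactly the set of atoms derivable from $T$. This requires isolating the combinatorial invariant of $T$ realizable by subspace arrangements and proving that the five rules generate precisely the closure under that invariant: in effect, that the cumulative effect of all atoms of $T$ under the Exchange Rule corresponds exactly to the lattice of intersections of the chosen subspaces. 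The technical work is to build the arrangement so that the single intended overlap between $\vy$ and $\vx$ survives while no \emph{spurious} overlap is created that would accidentally validate an atom outside the closure; a convenient way is to take vectors in general position over a sufficiently large field, guided by a minimal non-derivation of $\vy\boto\vx$, and verify directly that this choice is a model of $T$ refuting the target. The remaining computations—the product-support characterization and the bookkeeping for constants—are routine.
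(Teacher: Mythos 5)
Your soundness half is fine, and your two structural observations are both correct: naive product constructions are inadequate because they validate the unsound composition principle, and a linear team over a finite field (team $=$ evaluations of functionals $\ell_w$, with $\vu\boto\vv$ holding iff ${\rm span}\{\ell_u\}\cap{\rm span}\{\ell_v\}=\{0\}$) is the right kind of countermodel -- indeed the paper's own countermodel is exactly such a team over $\mathbb{F}_2$. But the completeness half has a genuine gap: you never construct the arrangement, and the one concrete recipe you offer, vectors ``in general position over a sufficiently large field,'' points the wrong way. General position makes all the relevant spans intersect trivially, hence satisfies \emph{every} independence atom, including the target $\vy\boto\vx$ you need to refute. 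The entire difficulty is to introduce one deliberate linear dependency tying $\vx$ to $\vy$ while proving that no atom of $T$ is thereby violated, and that verification is not ``routine'': it is precisely where the axioms, and the closure of $T$ under them, must be invoked.

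Here is how the paper fills that hole. It assumes WLOG that $\Sigma=T$ is closed under the rules and -- this is the key missing idea -- that $\vx\boto\vy$ is a \emph{minimal} underivable atom: for all $\vxp\subseteq\vx$, $\vyp\subseteq\vy$ with at least one containment proper, semantic consequence already implies $\vxp\boto\vyp\in\Sigma$. The team over $\{0,1\}$ then consists of all assignments that are $0$ on the derivably constant variables (those $u$ with $u\boto u\in\Sigma$; your $\ell_u=0$) and satisfy the single parity constraint $s(x_1)\equiv s(x_2)+\cdots+s(x_l)+s(y_1)+\cdots+s(y_m) \pmod 2$, i.e.\ your $\ell_{x_1}=\ell_{x_2}+\cdots+\ell_{y_m}$ with all other functionals independent. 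Refuting $\vx\boto\vy$ is then immediate, but showing that an arbitrary $\vv\boto\vw\in\Sigma$ survives requires a case analysis: if $\vv$ or $\vw$ avoids $\vx\vy$, or if $\vv\vw$ fails to cover $\vx\vy$, the parity constraint can be absorbed by a variable not occurring in $\vv\vw$; in the critical case where $\vv\vw$ covers $\vx\vy$, the paper writes $\vv=\vxp\vyp\vzp$, $\vw=\vxpp\vypp\vzpp$ and argues \emph{syntactically}: minimality puts the proper sub-atoms of $\vx\boto\vy$ into $\Sigma$, and Exchange and Weakening applied to these together with $\vv\boto\vw$ would derive $\vx\boto\vy\in\Sigma$, a contradiction. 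This interplay between the minimal choice of the refuted atom and the deductive closure of $\Sigma$ is the heart of the proof, and it is exactly the step your proposal defers; without it there is no reason why any particular arrangement satisfies all of $T$ while refuting the target.
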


\begin{proof} We adapt the proof of \cite{MR1097266} into our framework.
Suppose $\vx\boto\vy$ follows semantically from $\Sigma$ but does not follow  by the above rules. W.l.o.g. $\Sigma$ is closed under the rules. We may assume that $\vx$ and $\vy$ are minimal, that is, if $\vec{x}\hspace{1pt}'\subseteq \vx$ and $\vec{y}\hspace{1pt}'\subseteq \vy$ and at least one containment is proper, then if $\vx\hspace{1pt}'\boto\vy\hspace{1pt}'$ follows from $\Sigma$ semantically, it also follows by the rules. It is easy to see that if $\Sigma\models u\boto u$, then 
$\Sigma\vdash u\boto u$.

Suppose $\vx=(x_1,\ldots,x_l)$ and $\vy=(y_1,\ldots,y_m)$. Let $\vz=(z_1,\ldots,z_k)$ be the remaining variables. Wlog, $l\ge 1$ and $m\ge 1$, $x_1\boto x_1\notin\Sigma$, and $x_1\notin\{y_1,\ldots,y_m\}$.

We construct a team $S$ in a 2-element model $M=\{0,1\}$ of the empty vocabulary as follows: We take  to $S$ every $s:\vx\vy\vz\to M$, which satisfies $s(u)=0$ for $u$ such that $u\boto u\in \Sigma$ and in addition
$$s(x_1)=\mbox{ the number of ones in $s[\{x_2,\ldots,x_l,y_1,\ldots,y_m\}]$ mod 2}$$

\medskip

\noindent Claim 1: $\vx\boto\vy$ is not true in $S$. Suppose otherwise. Consider the following two assignments in $S$:

\medskip

\begin{center}
\begin{tabular}{l|ccccc|}
\hline
&$x_1$&other $x_i$&$y_1$&other $y_i$&other\\
\hline
$s$&1&0&1&0&0\\
$s'$&0&0&0&0&0\\
\hline\end{tabular}
\end{center}
If $s''$ is such that $s''(\vx)=s(\vx)$ and $s''(\vy)=s'(\vy)$, then $s''\notin S$. Claim 1 is proved.

\medskip
\noindent Claim 2: $S$ satisfies all the independence atoms  in $\Sigma$.
Suppose $\vv\boto\vw\in S$. If  either $\vv$ or $\vw$ contains only variables in $Z$, then the claim is trivial, as then either $\vv$ or $\vw$ has in $S$ all possible binary sequences.
So let us assume that 
both $\vv$ and $\vw$ meet $\vx\vy$.
If  $\vv\vw$ does not cover all of $\vx\vy$, then  $S$ satisfies $\vv\boto\vw$, because we can fix parity on the variable in $\vx\vy$ which does not occur in $\vv\vw$. So let us assume  $\vv\vw$ covers all of $\vx\vy$. Thus $\vv=\vxp\vyp\vzp$ and $\vw=\vxpp\vypp\vzpp$, where $\vxp\vxpp=\vx$ and $\vyp\vypp=\vy$. 
W.l.o.g., $\vxp\ne\emptyset$ and $\vxp\vyp\ne\vx\vy$. 
By minimality $\vxp\boto\vyp\in \Sigma$ and $\vxpp\boto\vy\in \Sigma$. Since $\vv\boto\vw\in \Sigma$, a couple of applications of the Exchange and Weakening Rules gives $\vxp\vyp\boto\vxpp\vypp\in\Sigma$. But then $\vxp\vxpp\boto\vyp\vypp\in \Sigma$, contrary to assumption.

\end{proof}

We can use the conditions (\ref{truth}) to extend the truth definition to the entire {\em independence logic}, i.e. the extension of first order logic by the independence atoms. Can we axiomatize logical consequence in independence logic?
The answer is again no, and for the same reason as for dependence logic:
Recall that the sentence (\ref{infty}) characterizes infinity and ruins any hope to have a completeness theorem for dependence logic. We can do the same using independence atoms:

\begin{lemma}
The sentence
\begin{equation}
\label{infty1}
\exists z\forall x\exists y\forall u\exists v(xy\boto uv\wedge(x=u\leftrightarrow y=v)\wedge \neg v=z)
\end{equation}
is true exactly in infinite models.
\end{lemma}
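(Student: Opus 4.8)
The plan is to show that (\ref{infty1}) expresses that $M$ carries an injection into a proper subset of itself, i.e.\ that $M$ is Dedekind-infinite, which under the usual conventions coincides with being infinite. First I would unwind the team semantics of (\ref{infty1}). Evaluating the sentence on the singleton team $\{\emptyset\}$ produces, after the prefix $\exists z\forall x\exists y\forall u\exists v$, a final team $S$ on the variables $z,x,y,u,v$ whose restriction satisfies the matrix $(xy\boto uv)\wedge(x=u\leftrightarrow y=v)\wedge\neg v=z$. Since equalities are flat (first order), the biconditional $x=u\leftrightarrow y=v$ holds in $S$ exactly when $s(x)=s(u)\iff s(y)=s(v)$ for every $s\in S$. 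Write $A=\{(s(x),s(y)):s\in S\}$ and $B=\{(s(u),s(v)):s\in S\}$ for the two projections. The universal quantifiers $\forall x$ and $\forall u$ guarantee that the first coordinates of $A$ and of $B$ exhaust $M$.

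The crux is the following observation, which I would isolate as a lemma. The independence atom $xy\boto uv$ forces the $(x,y,u,v)$-reduct of $S$ to contain the whole product $A\times B$, so the biconditional must hold for \emph{every} pair $(a,b)\in A$ and $(c,d)\in B$, giving $a=c\iff b=d$. From this single condition, together with $\mathrm{proj}_1 A=\mathrm{proj}_1 B=M$, I would deduce in a few lines that $A$ and $B$ are graphs of one and the same total function $f:M\to M$ (single-valuedness of each follows by pairing a putative double value with a witness of matching first coordinate drawn from the other set; the equality of the two functions follows by taking equal first coordinates), and that $f$ is injective (if $f(a)=f(c)$ then the biconditional forces $a=c$). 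Establishing this rigidity --- that independence collapses the two pairs to a common injective function --- is the main obstacle; everything else is bookkeeping.

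It then remains to feed in the forbidden-value clause. The conjunct $\neg v=z$ says $s(v)\ne s(z)$ for all $s\in S$; fixing any value $c_0$ taken by $z$ and using $\forall u$ to range $u$ over all of $M$ (so that $v=f(u)$ ranges over $\ran(f)$ alongside this fixed $z$-value) yields $c_0\notin\ran(f)$, so $f$ is not surjective. An injective, non-surjective self-map of $M$ exists only when $M$ is (Dedekind-)infinite, which settles the ``only if'' direction.

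For the converse, given an infinite $M$ I would choose such an $f$ together with some $c_0\in M\setminus\ran(f)$ and play the obvious strategy: set $z\equiv c_0$, answer $\exists y$ by $y=f(x)$ and $\exists v$ by $v=f(u)$, letting $\forall x,\forall u$ range freely. The resulting team has $A=B=\{(a,f(a)):a\in M\}$, so all three conjuncts hold --- independence because the $(x,y,u,v)$-reduct is exactly the relevant product, the biconditional because $f$ is injective, and $\neg v=z$ because $c_0\notin\ran(f)$ --- completing the proof. I would also remark that the argument is insensitive to whether the lax or the exact semantics is taken for the existential quantifiers, since the extra assignments permitted by the lax clause only add further instances of the very same constraints.
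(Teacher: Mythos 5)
Your proof is correct; what you could not have known is that the paper states this lemma \emph{without any proof at all}, so there is no authors' argument to compare yours against --- your write-up fills a genuine gap rather than diverging from the paper's route. The argument you give is evidently the intended one, since it exactly parallels the paper's earlier dependence-atom sentence (\ref{infty}): there $\dep(z,y)$ plays the role that $xy\boto uv\wedge(x=u\leftrightarrow y=v)$ plays here, and in both cases the matrix forces the existential witnesses to trace out a single injective function $f:M\to M$, while the forbidden-value conjunct ($\neg z=x$ there, $\neg v=z$ here) makes $f$ non-surjective, so truth is equivalent to Dedekind-infinity (which, as you note, coincides with infinity under the usual choice conventions). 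Your key rigidity step checks out against Definition~\ref{depdef}: with $\vec{x}=xy$, $\vec{y}=uv$, the atom says precisely that for all $s,s'\in S$ some $s''\in S$ has $s''(uv)=s(uv)$ and $s''(xy)=s'(xy)$, so the $(x,y,u,v)$-reduct of $S$ contains $A\times B$; and since $x=u\leftrightarrow y=v$ is first order, hence flat (a fact the paper itself records), the biconditional holds pointwise across all of $A\times B$, from which your few-line derivation that $A$ and $B$ are the graph of one and the same injective $f$ goes through. The non-surjectivity step is also sound, because $\forall u$ lies within the scope of $\exists z$, so every $e\in M$ occurs as a $u$-value in some assignment carrying any fixed $z$-value $c_0$, forcing $f(e)\ne c_0$ for all $e$; the converse construction and your closing remark that lax versus strict readings of the existential quantifiers make no difference are likewise correct.
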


The conclusion is that the kind of dependence relation needed for expressing infinity can be realized either by the functional dependence relation or by the independence relation. Another such example is parity in finite models. The following two sentences, the first one with a dependence atom and the second with an independence atom, both express the evenness of the size of a finite model:

$$\forall x\exists y\forall u\exists v(\dep(u,v)\wedge (x=v\leftrightarrow y=u)\wedge \neg x=y)$$
$$\forall x\exists y\forall u\exists v(xy\boto uv\wedge (x=v\leftrightarrow y=u)\wedge \neg x=y)$$

The fact that we could express, at will,  both infinity and evenness by means of either dependence atoms or independence atoms,  is not an accident. Dependence logic and independence logic have overall the same expressive power:

\begin{theorem}
The following are equivalent:
\begin{description}

\item[(1)] $K$ is definable by a sentence of the extension of first order logic by the dependence atoms.
\item[(2)] $K$ is definable by a sentence of the extension of first order logic by the independence atoms.
\item[(3)] $K$ is definable in existential second order logic.

\end{description}
\end{theorem}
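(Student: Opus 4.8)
The plan is to prove the cycle $(2)\Rightarrow(3)\Rightarrow(1)\Rightarrow(2)$, using Theorem~\ref{main1} to absorb everything involving dependence atoms, so that the genuinely new work concerns only the two arrows touching independence logic.

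For $(2)\Rightarrow(3)$ I would argue by a routine induction on the structure of an independence-logic formula $\phi(x_1,\dots,x_n)$, producing an existential second-order $L\cup\{S\}$-sentence $\Phi(S)$, with $S$ an $n$-ary predicate encoding the team, such that $\mm\models_X\phi\iff\mm\models\Phi(X)$. The atomic cases $x=y$, $R(\vx)$ and their negations are first order in $S$; the independence atom $\vx\boto\vy$ translates verbatim into the first-order-over-$S$ condition of Definition~\ref{depdef}, namely $\forall s,s'\in S\,\exists s''\in S(\dots)$; the splitting disjunction becomes an existential guess of two subpredicates $S_1,S_2$ with $S=S_1\cup S_2$; and the quantifiers $\exists x$, $\forall x$ become existential guesses of the successor team, all of which stay inside $\Sigma^1_1$. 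Applying this to a sentence, so that $n=0$ and $X=\{\emptyset\}$, and substituting the value of the nullary $S$ gives an ESO $L$-sentence defining $K$. Note that, unlike in Theorem~\ref{main1}, no downward-closure clause is imposed, which is exactly why independence logic can be expected to reach all of $\Sigma^1_1$.

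For $(3)\Rightarrow(1)$ I would invoke the second bullet of Theorem~\ref{main1}. Given an ESO $L$-sentence $\Psi$ defining $K$, take $S$ to be nullary and set $\Phi(S):=\neg S\vee\Psi$; this is ESO, and it is trivially downward closed in $S$ since it holds when $S$ is empty. Theorem~\ref{main1} then yields a dependence-logic sentence $\phi$ with $\mm\models\phi\iff\mm\models\Phi(\{\emptyset\})\iff\mm\models\Psi$, so $\phi$ defines $K$.

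The crux is $(1)\Rightarrow(2)$, where dependence logic must be embedded into independence logic. Since the only difference between the two logics is their atoms, it suffices to prove the key lemma that the functional dependence atom $\dep(\vy,\vx)$ is definable in independence logic; replacing every dependence atom in a dependence-logic sentence by its definition then yields an equivalent independence-logic sentence. The constancy case is already in the text ($\dep(\vx)\equiv\vx\boto\vx$), and the general atom is captured by the conditional independence atom via $\dep(\vy,\vx)\equiv\idep{\vx}{\vy}{\vx}$, since one checks directly from the semantics that $\idep{\vx}{\vy}{\vx}$ says precisely that equal $\vy$-values force equal $\vx$-values. It then remains either to eliminate the conditioning in favour of the plain atom $\boto$ -- which is exactly the equivalence of conditional and non-conditional independence logic announced for Section~\ref{cil} -- or to give a direct definition of $\dep(\vy,\vx)$ using $\boto$ together with the team connectives and quantifiers. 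This last step is where I expect the real difficulty: dependence atoms are downward closed whereas independence atoms are not, so any definition of $\dep(\vy,\vx)$ from $\boto$ must manufacture downward closure out of genuinely non-downward-closed ingredients, forcing the use of the existential quantifier and team splitting rather than a single atomic condition. Getting this encoding right, so that the guessed witnesses are constrained to behave like a function of $\vy$ on the whole team at once, is the main obstacle; once the lemma is in hand the cycle closes and all three conditions are equivalent.
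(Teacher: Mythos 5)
Your reductions $(2)\Rightarrow(3)$ and $(3)\Rightarrow(1)$ are fine: the first is the routine $\Sigma^1_1$ translation (which the paper treats as known), and the second is a legitimate way to extract the sentence case from the second bullet of Theorem~\ref{main1}, modulo the cosmetic issue of a nullary $S$ (one can instead use a unary $S$ and a dummy quantified variable). The genuine gap is exactly where you locate it, in $(1)\Rightarrow(2)$: you reduce everything to the lemma that $\dep(\vy,\vx)$ is definable by an independence-logic \emph{formula}, you correctly observe that $\dep(\vy,\vx)$ is the conditional atom $\idep{\vx}{\vy}{\vx}$, but you then leave the elimination of the conditioning open, calling it ``the main obstacle''. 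That elimination is not a routine step: it is precisely the novel theorem of Section~\ref{cil}, whose proof needs the machinery of inclusion and exclusion atoms from \cite{galliani12} (conditional independence logic equals inclusion/exclusion logic, plus the definition of the exclusion atom $\vx~|~\vy$ as $\exists\vz(\vx\subseteq\vz\wedge\vy\boto\vz\wedge\vy\ne\vz)$). So as written your proof is incomplete at its crux, and can be closed only by importing that later and much heavier result.

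The paper avoids this difficulty altogether by exploiting that the theorem is about \emph{sentences}, not formulas. It cites the known equivalence of (1) and (3), and proves $(1)\Rightarrow(2)$ via the normal form theorem for dependence-logic sentences: every such sentence is equivalent to one of the shape $\forall x\forall y\exists v\exists w(\dep(x,v)\wedge\dep(y,w)\wedge\phi)$ with $\phi$ quantifier-free first order, and in this prenex context the dependence atoms can be rewritten directly as independence atoms, e.g.\ as $\forall x\forall y\exists v\exists w(xv\boto y\wedge yw\boto xv\wedge\phi)$. No formula-level definability of $\dep$ from $\boto$ is needed, which is what makes the paper's argument elementary. To repair your proof without leaning on Section~\ref{cil}, replace the atom-by-atom translation by this sentence-level normal-form translation and verify that the displayed independence atoms force the required functional dependencies for teams arising in such prenex sentences.
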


\begin{proof}
The equivalence of (1) and (3), a consequence of results in \cite{MR44:1546}  and \cite{MR43:4646}, as observed in \cite{MR99i:03044}, is proved in \cite{MR2351449}.
So it suffices to show that (1) implies (2). We give only the main idea. Sentences referred to in (1) have a normal form \cite{MR2351449}. Here is an example of a sentence in such a normal form
$$
\forall x\forall y\exists v\exists w(\dep(x,v)\wedge\dep(y,w)\wedge\phi(x,y,v,w)),
$$
where $\phi(x,y,v,w)$ is a quantifier free first order formula. This sentence can be expressed in terms of independence atoms as follows:
$$
\forall x\forall y\exists v\exists w(xv\bot y\wedge yw\bot xv\wedge\phi(x,y,v,w)).
$$

\end{proof}

Note that independence, as we have defined it, is not the negation of dependence. It is rather a very strong denial of dependence. However, there are uses of the concepts of dependence and independence where the negation of dependence {\em is} the same as independence. An example is vector spaces.

There is an earlier common use of the concept of independence in logic, namely the independence of a set $\Sigma$ of axioms from each other. This is usually taken to mean that no axiom is provable from the remaining ones. By G\"odel's Completeness Theorem this means the same as having for each axiom $\phi\in \Sigma$ a model of the remaining ones $\Sigma\setminus\{\phi\}$ in which $\phi$ is false. This is not so far from the independence concept $\vy\boto\vx$. Again, the idea is that from the truth of $\Sigma\setminus\{\phi\}$ we can say nothing about the truth-value of $\phi$. This is the sense in which Continuum Hypothesis (CH) is independent of ZFC. Knowing the ZFC axioms gives as no clue as to  the truth or falsity of CH. In a sense, our independence atom $\vy\boto\vx$ is the familiar concept of independence transferred from the world of formulas to the world of elements of models, from truth values to variable values. 

\section{Conditional Independence}\label{cil}
The independence  atom $\vy\boto \vx$ turns out to be a special case of the more general atom $\yxz$, the intuitive meaning of which is that the variables $\vy$ are totally independent of the variables $\vz$ when the variables $\vx$ are kept fixed (see \cite{GV}). Formally, 
\begin{definition}
A team $S$ satisfies the atomic formula $\yxz$ in $\mm$ if 
\[
\forall s,s'\in S ( s(\vx) = s'(\vx) \rightarrow \exists s''\in S(s''(\vx\vy)=s(\vx\vy)\wedge s''(\vz)=s'(\vz))).
\]
\end{definition}

Some of the rules that this ``conditional'' independence notion obeys are

\begin{description}
\item[Reflexivity:] $\rind{\vx}{\vx}{\vy}$;
\item[Symmetry:] If $\rind{\vx}{\vy}{\vz}$, then $\rind{\vx}{\vz}{\vy}$;
\item[Weakening:] If $\rind{\vx}{\vy y'}{\vz z'}$, then $\rind{\vx}{\vy}{\vz}$;
\item[First Transitivity:] If $\rind{\vz}{\vx}{\vy}$ and $\rind{\vz\vx}{\vu}{\vy}$, then $\rind{\vz}{\vu}{\vy}$;
\item[Second Transitivity:] If $\rind{\vz}{\vy}{\vy}$ and $\rind{\vy}{\vz\vx}{\vu}$, then $\rind{\vz}{\vx}{\vu}$.
\item[Exchange:] If $\rind{\vz}{\vx}{\vy}$ and $\rind{\vz}{\vx\vy}{\vu}$, then $\rind{\vz}{\vx}{\vy\vu}$.
\end{description} 
Are these axioms complete? More in general, is it possible to find a finite, decidable axiomatization for the consequence relation between conditional independence atoms? 

The answer is negative. Indeed, in \cite{herrmann95,herrmann06} Hermann proved that the consequence relation between conditional independence atoms is undecidable; and as proved by Parker and Parsaye-Ghomi in \cite{parker80}, it is not possible to find a finite and complete axiomatization for these atoms. However, the consequence relation is recursively enumerable, and in \cite{naumovre} Naumov and Nicholls developed a proof system for it.

The logic obtained by adding conditional independence atoms to first order logic will be called in this paper \emph{conditional independence logic}. It is clear that it contains (nonconditional) independence logic; and furthermore, as discussed in \cite{GV}, it also contains dependence logic, since a dependence atom $\dep(\vx, \vy)$ can be seen to be equivalent to $\rind{\vx}{\vy}{\vy}$. It is also easy to see that every conditional independence logic sentence is equivalent to some $\Sigma_1^1$ sentence, and therefore that conditional independence logic is equivalent to independence logic and dependence logic with respect to sentences. 

But this leaves open the question of whether every conditional independence logic formula is equivalent to some independence logic one. In what follows, building on the analysis of the expressive power of conditional independence logic of \cite{galliani12},\footnote{In that paper, conditional independence logic is simply called ``independence logic''. After all, the two logics are equivalent.} we prove that independence logic and conditional independence logic are indeed equivalent. 

In order to give our equivalence proof we first need to mention two other atoms, the inclusion atom $\vx \subseteq \vy$ and the exclusion atom $\vx ~|~ \vy$. These atoms correspond to the database-theoretic inclusion \cite{fagin81,casanova82} and exclusion \cite{casanova83} dependencies, and hold in a team if and only if no possible value for $\vx$ is also a possible value for $\vy$ and if every possible value for $\vx$ is a possible value for $\vy$ respectively. More formally, 
\begin{definition}
A team $S$ satisfies the atomic formula $\vx \subseteq \vy$ in $\mm$ if 
\[
\forall s\in S \exists s'\in S(s'(\vy)=s(\vx))
\]
and it satisfies the atomic formula $\vx ~|~ \vy$ in $\mm$ if 
\[
\forall s, s' \in S(s(\vx) \not = s'(\vy)).
\]
\end{definition}

As proved in \cite{galliani12}, 
\begin{enumerate}
\item Exclusion logic (that is, first order logic plus exclusion atoms) is equivalent to dependence logic;
\item Inclusion logic (that is, first order logic plus inclusion atoms) is not comparable with dependence logic, but is contained in (nonconditional) independence logic;
\item Inclusion/exclusion logic (that is, first-order logic plus inclusion and exclusion atoms) is equivalent to \emph{conditional} independence logic (that is, first-order logic plus conditional independence atoms $\rind{\vx}{\vy}{\vz}$).
\end{enumerate}
Thus, if we can show that exclusion atoms can be defined in terms of (nonconditional) independence atoms and of inclusion atoms, we can obtain at once that independence logic contains conditional independence logic (and, therefore, is equivalent to it). But this is not difficult: indeed, the exclusion atom $\vx ~|~ \vy$ is equivalent to the expression
\[
\exists \vz (\vx \subseteq \vz \wedge \vy \boto \vz \wedge \vy \not = \vz).
\]
This can be verified by checking the satisfaction conditions of this formula. But more informally speaking, the reason why this expression is equivalent to $\vx ~|~ \vy$ is that it states that that every possible value of $\vx$ is also a possible value for $\vz$, that $\vy$ and $\vz$ are independent (and therefore, any possible value of $\vy$ must occur together with any possible value of $\vz$), and that $\vy$ is always different from $\vz$. Such a $\vz$ may exist if and only if no possible value of $\vx$ is also a possible value of $\vy$, that is, if and only if $\vx ~|~ \vy$ holds.

Hence we may conclude at once that 
\begin{theorem}
Every conditional independence logic formula is equivalent to some independence logic formula. 
\end{theorem}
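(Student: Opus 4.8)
The plan is to combine the three equivalences imported from \cite{galliani12} with the translation of the exclusion atom displayed just above, and to glue them together by a routine replacement argument. Since the theorem asserts only that conditional independence logic is contained in independence logic---the converse being immediate, because the unconditional atom $\vy\boto\vz$ is the conditional atom $\rind{\emptyset}{\vy}{\vz}$ with empty conditioning tuple---it suffices to embed inclusion/exclusion logic into independence logic.

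First I would reduce the problem to its atoms. By the equivalence of inclusion/exclusion logic with conditional independence logic, every conditional independence logic formula is already equivalent to an inclusion/exclusion logic formula; so it is enough to prove that every inclusion/exclusion logic formula is equivalent to an independence logic formula. Independence logic and inclusion/exclusion logic share exactly the same team-semantic clauses for the connectives and quantifiers in (\ref{truth}) and differ only in their stock of atomic formulas. Hence, by a replacement lemma---if a subformula is replaced by a team-equivalent one, the satisfaction of the whole formula on every team is unchanged, since satisfaction is defined compositionally---it suffices to treat the two new atoms separately and show that each of $\vx\subseteq\vy$ and $\vx~|~\vy$ is equivalent to some independence logic formula.

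For the inclusion atom, the containment of inclusion logic in independence logic from \cite{galliani12} directly supplies an independence logic formula equivalent to $\vx\subseteq\vy$. For the exclusion atom I would invoke the translation verified just before the theorem,
$$\vx~|~\vy \;\equiv\; \exists\vz\,(\vx\subseteq\vz\wedge\vy\boto\vz\wedge\vy\neq\vz),$$
with $\vz$ a fresh tuple of variables. This expression is not yet in independence logic, because it still mentions the inclusion atom $\vx\subseteq\vz$; but substituting for that atom the independence logic formula produced in the previous case, and once more applying the replacement lemma, yields an independence logic formula equivalent to $\vx~|~\vy$. Together with the reduction of the first paragraph, this establishes the theorem.

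The work is not in the ideas but in the bookkeeping behind the replacement lemma: one must confirm that team-equivalence of formulas really is preserved when they sit inside the splitting disjunction and the existential and universal quantifier clauses of (\ref{truth}), and that the auxiliary variables $\vz$ chosen in the exclusion translation are kept disjoint from the free and bound variables already occurring. One should also watch the empty-team conventions, since some equivalences imported from \cite{galliani12} and the second half of Theorem \ref{main1} carry a nonempty-team caveat; here, though, every equivalence is between formulas read under the same team semantics, so any exceptional behaviour on the empty team is shared by both sides and does no harm.
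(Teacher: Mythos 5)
Your proposal is correct and follows essentially the same route as the paper: convert the conditional independence formula to inclusion/exclusion logic via the equivalence from \cite{galliani12}, eliminate exclusion atoms using the displayed translation $\exists\vz(\vx\subseteq\vz\wedge\vy\boto\vz\wedge\vy\neq\vz)$, and eliminate the remaining inclusion atoms via the containment of inclusion logic in independence logic. The only difference is that you make explicit the compositional replacement lemma and the variable-freshness bookkeeping that the paper leaves implicit in its ``hence we may conclude at once.''
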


In \cite{galliani12} it was also shown the following analogue of Theorem \ref{main1}: 
\begin{theorem}\label{main_ind} Let us fix a vocabulary $L$ and an $n$-ary predicate symbol $S\notin L$. Then:
\begin{itemize}
\item For every $L$-formula $\phi(x_1,...,x_n)$ of conditional independence logic  there is an existential second order $L\cup\{S\}$-sentence $\Phi(S)$ such that for all $L$-structures $M$  and all teams $X$:
\begin{equation}
\label{eq2}
\mm\models_X\phi(x_1,...,x_n)\iff \mm\models\Phi(X).
\end{equation}

\item For every existential second order $L\cup\{S\}$-sentence $\Phi(S)$ there exists an $L$-formula $\phi(x_1,...,x_n)$ of conditional independence logic  such that (\ref{eq2}) holds  for all $L$-structures $M$ and all teams $X\ne\emptyset$. 
\end{itemize} \end{theorem}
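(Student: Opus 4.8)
The plan is to prove the two inclusions separately. Throughout I exploit the theorem proved just above, that conditional independence logic and (nonconditional) independence logic define the same formulas up to equivalence, so I am free to use dependence atoms ($\dep(\vx,\vy)\equiv\rind{\vx}{\vy}{\vy}$), inclusion atoms $\vx\subseteq\vy$ and exclusion atoms $\vx~|~\vy$ at will. Unlike Theorem~\ref{main1}, there is no downward-closure hypothesis on $\Phi(S)$: the whole point is that these logics capture all of $\Sigma_1^1$, including non-downward-closed properties, which is possible precisely because the independence and inclusion atoms are not downward closed. The only delicate boundary case is the empty team, which satisfies every formula of the logic; this is why the second clause must be restricted to $X\ne\emptyset$.

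\textbf{First direction (left to right).} I would argue by induction on $\phi$ that there is a $\Sigma_1^1$ sentence $\Phi(S)$ with $\mm\models_X\phi\iff\mm\models\Phi(X)$ for every team $X$ (now including the empty one). Each atom -- a first-order literal, a conditional independence atom, an inclusion atom, or an exclusion atom -- translates into a first-order condition on the relation $S$, read directly off its satisfaction clause. The connectives and quantifiers mimic the team-semantic clauses (\ref{truth}) by quantifying the auxiliary team relations existentially: $\phi\wedge\psi$ gives $\Phi_\phi(S)\wedge\Phi_\psi(S)$; $\phi\vee\psi$ gives $\exists S_1\exists S_2$ expressing $S=S_1\cup S_2$ together with $\Phi_\phi(S_1)\wedge\Phi_\psi(S_2)$; and $\exists x\,\phi$, $\forall x\,\phi$ give $\exists S'$ expressing the relevant first-order transition between $S$ and the supplemented team $S'$, conjoined with $\Phi_\phi(S')$. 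Each step only prefixes existential second-order quantifiers and conjoins first-order matter, so $\Sigma_1^1$ is preserved; and the empty relation satisfies every such $\Phi$, matching the fact that the empty team satisfies $\phi$.

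\textbf{Second direction (right to left), the substantial one.} Given $\Phi(S)=\exists R_1\cdots\exists R_m\,\psi$ with $\psi$ first-order over $L\cup\{S,R_1,\dots,R_m\}$, I would build a formula $\phi(x_1,\dots,x_n)$ that, on a nonempty team $X$, verifies $\Phi(X)$. The design has four ingredients. (a) I keep the original variables $\vx$ untouched, so that the $\vx$-projection of the team stays exactly $X$; then membership $\vu\in S$ and non-membership are tested by the inclusion atom $\vu\subseteq\vx$ and the exclusion atom $\vu~|~\vx$. (b) Writing $\psi$ in prenex form, each universal first-order quantifier is simulated by a team-universal quantifier (giving access to all elements of $M$) and each existential one by a team-existential quantifier (a choice of Skolem values). (c) Each second-order $\exists R_i$ is simulated by first team-universally quantifying a block $\vw$ of argument variables and then existentially quantifying a membership bit $r_i$ constrained by $\dep(\vw,r_i)$; this forces $r_i$ to depend only on its argument, i.e.\ to encode a genuine relation, while the existential choice is free enough to realize any relation because all argument tuples are present. (d) To evaluate the quantifier-free matrix I record the truth value of each $S$- and $R_i$-literal into a further functionally-determined Boolean variable (via inclusion/exclusion and the bits $r_i$), after which the matrix becomes a purely first-order condition on these Boolean variables that can be checked flatly.

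The main obstacle is exactly ingredients (c)--(d): encoding inside a single team second-order objects that must be \emph{global} -- one relation $R_i$ shared across all instantiations of the universal first-order quantifiers -- and then ``looking them up'' while checking the matrix. The difficulty is that the atoms doing the encoding (inclusion, exclusion, dependence) are irreducibly team-level and do not localize to single assignments, so one cannot evaluate the matrix assignment by assignment as in Tarski semantics; it is the dependence atoms making the membership bits functional, together with the independence inherent in the team-existential choices, that decouple the choice of each $R_i$ from the universal instantiation and keep the relation well defined across the whole team. Verifying that this simulation is faithful in both directions -- every witness for $\Phi(X)$ yields a satisfying extension of the team, and conversely -- is the technical heart, carried out in detail in \cite{galliani12}; the restriction to $X\ne\emptyset$ reappears here because on the empty team the construction trivializes while $\Phi(\emptyset)$ need not hold.
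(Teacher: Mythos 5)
The paper itself offers no proof of this theorem: it merely states it and attributes it to \cite{galliani12} (``In \cite{galliani12} it was also shown the following analogue of Theorem \ref{main1}''), so there is no detailed argument in the paper against which to measure your attempt. In that sense your proposal and the paper end in the same place, since you too defer the decisive verification to \cite{galliani12}. Your left-to-right direction is correct and routine: each atom's satisfaction condition is first order in the predicate $S$, and the clauses (\ref{truth}) contribute only existential second-order quantifiers and first-order matter, so the induction goes through, with the empty team handled as you say.

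For the right-to-left direction, your ingredients (a)--(d) are indeed the ones used in the cited construction: inclusion and exclusion atoms to test membership against the $\vec{x}$-projection of the team, team quantifiers simulating the first-order prefix, and dependence atoms forcing guessed membership bits to be functional and hence uniform across the whole team. But be aware that what you call ``the technical heart'' and leave to the citation is exactly where a blind construction can fail, for concrete reasons. First, the matrix of $\psi$ cannot be checked ``flatly'': inclusion atoms are neither downward nor upward closed, so under the team-splitting semantics of $\vee$ a subteam may have a strictly smaller $\vec{x}$-projection, and the atom $\vec{u}\subseteq\vec{x}$ evaluated in that subteam no longer expresses membership in $X$; the known proofs need a normal form and additional bookkeeping (or a reduction that reuses Theorem \ref{main1} as a black box on a suitably downward-closed modification of $\Phi$) precisely to control this. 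Second, encoding relations via Boolean-valued variables presupposes two distinct elements in the structure, so one-element models require separate treatment. Since you name these obstacles but do not resolve them, your text is an outline of the cited proof rather than a self-contained proof; given that the paper does exactly the same, the fair verdict is that both your write-up and the paper's stand or fall with \cite{galliani12}, with yours at least making the shape of the argument explicit.
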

Due to the equivalence between independence logic and conditional independence logic, the same result holds if we only allow nonconditional independence atoms. In particular, this implies that over finite models independence logic captures precisely the NP properties of teams. 




\section{Belief Representation and Belief Dynamics}\label{bel}
Given a model $\mm$, a variable assignment $s$ admits a natural interpretation as the representation of a possible \emph{state of things}, where, for every variable $v$, the value $s(v)$ corresponds to a specific \emph{fact} concerning the world. To use the example discussed in Chapter 7 of \cite{pietro1}, let the elements of $\mm$ correspond to the participants to a competition: then the values of the variables $x_1$, $x_2$ and $x_3$ in an assignment $s$ may correspond respectively to the first-, second- and the third-placed players. 

With respect to the usual semantics for first order logic, a first order formula represents a \emph{condition} over assignments. For example, the formula 
\[
	\phi(x_1, x_2, x_3) := (\lnot x_1  = x_2) \wedge (\lnot x_2 = x_3) \wedge (\lnot x_1 = x_3)
\]
represents the (very reasonable) assertion according to which the winner, the second-placed player and the third-placed player are all distinct. 

Now, a team $S$, being a set of assignments, represents a set of states of things. Hence, a team may be interpreted as the \emph{belief set} of an agent $\alpha$: $s \in S$ if and only if the agent $\alpha$ believes $s$ to be possible. Moving from assignments to teams, it is possible to associate to each formula $\phi$ and model $\mm$ the family of teams $\{ S : \mm \models_S \phi \}$, and this allows us to interpret formulas as \emph{conditions over belief sets}: in our example, $\mm \models_S \phi(x_1, x_2, x_3)$ if and only if $\mm \models_s \phi(x_1, x_2, x_3)$ for all $s \in S$, that is, if and only if our agent $\alpha$ believes that the winner, the second-placed player and the third-placed player will all be distinct.

However, there is much that first order logic cannot express regarding the beliefs of our agent. For example, there is no way to represent the assertion that the agent $\alpha$ \emph{knows} who the winner of the competition will be: indeed, suppose that a first order formula $\theta$ represents such a property,  and let $s_1$ and $s_2$ be any two assignments with $s_1(x_1) \not = s_2(x_1)$, corresponding to two possible states of things which disagree with respect to the identity of the winner. Then, for $S_1 = \{s_1\}$ and $S_2 = \{s_2\}$, we should have that $\mm \models_{S_1} \theta$ and that $\mm \models_{S_2} \theta$: indeed, both $S_1$ and $S_2$ correspond to belief sets in which the winner is known to $\alpha$ (and is respectively $s_1(x_1)$ or $s_2(x_1)$). But since a team $S$ satisfies a first order formula if and only if all of its assignments satisfy it, this implies that $M \models_{S_1 \cup S_2} \theta$; and this is unacceptable, because if our agent $\alpha$ believes both $s_1$ and $s_2$ to be possible then she does not know whether the winner will be $s_1(x_1)$ or $s_2(x_1)$. 

How to represent this notion of knowledge? The solution, it is easy to see, consists in adding \emph{constancy atoms} to our language: indeed, $\mm \models_S
 =dep(x_1)$ if and only if for any two assignments $s, s' \in S$ we have that $s(x_1) = s'(x_1)$, that is, if and only if all states of things the agent $\alpha$ consider possible agree with respect to the identity of the winner of the competition. 
What if, instead, our agent could infer the identity of the winner from the identity of the second- and third-placed participants? Then we would have that $\mm \models_S \dep(x_2 x_3, x_1)$, since any two states of things which the agent considered possible and which agreed with respect to the identity of the second- and third-placed participants would also agree with respect to the identity of the winner. More in general, a dependence atom $\dep(\vy, \vx)$ describes a form of \emph{conditional knowledge}: $\mm \models_S \dep(\vy, \vx)$ if and only if $S$ corresponds to the belief state of an agent who would be able to deduce the value of $\vx$ from the value of $\vy$. 

On the other hand, independence atoms represent situations of \emph{informational independence}: for example, if $\mm \models_S x_1 \boto x_3$ then, by learning the identity of the third-placed player, our agent could infer nothing at all about the identity of the winner. Indeed, suppose that, according to our agent, it is possible that $A$ will win (that is, there is a $s \in S$ with $s(x_1) = A$) and it is possible that $B$ will place third (that is, there is a $s' \in S$ such that $s'(x_3) = B$). Then, by the satisfaction conditions of the independence atom, there is also a $s'' \in S$ such that $s''(x_1) = A$ and $s''(x_3) = B$: in other words, it is possible that $A$ will be the winner \emph{and} $B$ will place third, and telling our agent that $B$ will indeed place third will not allow her to remove $A$ from her list of possible winners.

Thus, it seems that dependence and independence logic, or at least fragments thereof, may be interpreted as \emph{belief description languages}. This line of investigation is pursued further in \cite{pietro1}: here it will suffice to discuss the interpretation of the \emph{linear implication}\footnote{The name ``linear implication'' is due to the similarity between the satisfaction conditions of this connective and the ones of the implication of linear logic. Another similarity is the following Galois connection: $\theta\models\phi \multimap \psi\iff\theta\vee\phi\models\psi$ \cite{abramskyvaananen}.} $\phi \multimap \psi$, a connective introduced in \cite{abramskyvaananen} whose semantics is given by 
\[
	\mm \models_S \phi \multimap \psi \Leftrightarrow \mbox{ for all } S' \mbox{ such that } \mm \models_{S'} \phi \mbox{ it holds that } \mm \models_{S \cup S'} \psi.
\]
%

How to understand this connective? Suppose that our agent $\alpha$, whose belief state is represented by the team $S$, interacts with another agent $\beta$, whose belief state is represented by the team $S'$: one natural outcome of this interaction may be represented by the team $S \cup S'$, corresponding to the set of all states of things that $\alpha$ \emph{or} $\beta$ consider possible. Then stating that a team $S$ satisfies $\phi \multimap \psi$ corresponds to asserting that whenever our agent $\alpha$ interacts with another agent $\beta$ whose belief state satisfies $\phi$, the result of the interaction will be a belief state satisfying $\psi$: in other words, using the linear implication connective allows us to formulate predictions concerning the future \emph{evolution} of the belief state of our agent.

One can, of course, consider other forms of interactions between agents and further connectives; and quantifiers can also be given natural interpretations in terms of belief updates (the universal quantifier $\forall v$, for example, can be understood in terms of the agent $\alpha$ \emph{doubting} her beliefs about $v$). But what we want to emphasize here, beyond the interpretations of the specific connectives, is that team-based semantics offers a very general and powerful framework for the representation of beliefs and belief updates, and that notions of dependence and independence arise naturally under such an interpretation. This opens up some fascinating -- and, so far, relatively unexplored -- avenues of research, such as for example a more in-depth investigation of the relationship between dependence/independence logic and dynamic epistemic logic (DEL) and other logics of knowledge and belief; and, furthermore, it suggests that epistemic and doxastic ideas may offer some useful inspiration for the formulation and analysis of further notions of dependence and independence.

\section{Concluding remarks}

We hope to have demonstrated that both dependence and independence can be given a logical analysis by moving in semantics from single states $s$ to plural states $S$. Future work will perhaps show that  allowing limited transitions from one plural state to another may lead to decidability results concerning  dependence and independence logic, a suggestion of Johan van Benthem. 

Furthermore, we proved the equivalence between conditional independence logic and independence logic, thus giving a novel contribution to the problem of characterizing the relations between extensions of dependence logic.  

Finally, we discussed how team-based semantics may be understood as a very general framework for the representation of beliefs and belief updates and how notions of dependence and independence may be understood under this interpretation. This suggests the existence of intriguing connections between dependence and independence logic and other formalisms for belief knowledge representation, as well as a possible application for this fascinating family of logics.

\bibliographystyle{plainurl}
\bibliography{indepbib}

\end{document}